\newtheorem{theorem}{Theorem}[section]
\newtheorem{lemma}[theorem]{Lemma}
\newtheorem{proposition}[theorem]{Proposition}
\newtheorem{corollary}[theorem]{Corollary}
\theoremstyle{definition}
\newtheorem{definition}[theorem]{Definition}
\newtheorem{remark}[theorem]{Remark}
\numberwithin{equation}{section}
\let\d=\delta
\let\r=\rho
\let\om=\omega
\let\ep=\epsilon
\let\va=\varphi
\let\fy=\infty
\newcommand{\be}{\begin{equation*}}
\newcommand{\ee}{\end{equation*}}
\newcommand{\ben}{\begin{equation}}
\newcommand{\een}{\end{equation}}
\newcommand{\bn}{\begin{enumerate}}
\newcommand{\en}{\end{enumerate}}
\newcommand{\ba}{\begin{align}}
\newcommand{\ea}{\end{align}}
\def\calF{\mathcal {F}}
\def\rr{{\mathbb R}}
\def\rn{{{\rr}^n}}
\def\lpw{L^p_{\om}(\rn)}
\begin{document}
\title[On relatively compact sets in quasi-Banach function spaces]
{On relatively compact sets in quasi-Banach function spaces}
\author{WEICHAO GUO}
\address{School of Science, Jimei University, Xiamen, 361021, P.R.China}
\email{weichaoguomath@gmail.com}
\author{GUOPING ZHAO}
\address{School of Applied Mathematics, Xiamen University of Technology, Xiamen, 361024, P.R.China}
\email{guopingzhaomath@gmail.com}
\thanks{}

\begin{abstract}
This paper is devoted to the study of the relatively compact sets in Quasi-Banach function spaces,
providing an important improvement of the known results.
As an application, we take the final step in establishing a relative compactness criteria for function spaces with any weight without any assumption.
\end{abstract}
\subjclass[2010]{46B50, 46E30, 42B35.}
\keywords{Banach function spaces, relatively compact, weight}
\thanks{Supported by the National Natural Foundation of China (Nos. 11701112, 11601456, 11671414, 11771388)
and  Natural Science Foundation of Fujian Province (Nos. 2017J01723, 2018J01430).}

\maketitle

\section{Introduction}\label{s1}

The characterization of relatively compactness in the classical $L^p$ Lebesgue spaces
was discovered by Kolmogorov (see \cite{Kolmogoroff1931NvdGdWzGMK,Tikhomirov1991}) under some restrictive conditions. Then
it was extended by Tamarkin \cite{Tamarkin1932BAMS} and Tulajkow \cite{Tulajkov}.
At the same time, M. Riesz \cite{Riesz1933ASSM} proved a similar result.
More precisely, the complete version of classical Riesz-Kolmogorov theorem can be stated as follows:
\\
\\
{\bf{Theorem A.}} (Classical Riesz-Kolmogorov theorem)
Let $1\leq p<\fy$. A subset $\calF$ of $L^p(\rn)$ is relatively compactness, if and only if the following three conditions hold:
\bn[(a)]
  \item $\calF$ is bounded, i.e., $\sup\limits_{f\in \calF}\|f\|_{L^p(\rn)}\lesssim 1$;
  \item $\calF$ uniformly vanishes at infinity, that is,
  \be
  \lim_{N\rightarrow \infty}\sup_{f\in \calF}\|f\chi_{B^c(0,N)}\|_{L^p(\rn)}= 0;
  \ee
  \item $\calF$ is equicontinuous, that is,
  \be
  \lim_{r\rightarrow 0}\sup_{f\in \calF}\sup_{y\in B(0,r)}\|\tau_yf-f\|_{L^p(\rn)}=0.
  \ee
\en
Here, $\tau_y$ denotes the translation operator: $\tau_yf(x)=f(x-y)$.

From then on, the compactness criterias were studied by many authors in various settings,
e.g. \cite{BandaliyevGorka2018BJMA,CaetanoGogatishviliOpic2016PRSESA,Gorka2014JAA,GorkaKostrzewa2016JAA,
GorkaMacios2015JFA,GorkaPospiechtoappearAoFA,Kalamajska1999AASFM,Pego1985PAMS,Rafeiro2009PARMI,Sudakov1957UMNN,Weil1940}.
Meanwhile,
it has played an important role in the compactness results of certain bounded operators in the field of harmonic analysis,
e.g. \cite{Uchiyama1978TMJ2,ChenDingWang2012CJMa, BenyiDamianMoenTorres2015MMJ, ChaffeeChenHanTorresWard2018PAMS, ClopCruz2013AASFM, WuYang2018PAMS}.
Among numerous of articles, we would like to mention some of them from the following two perspectives:
\bn
\item
{\bf{Extension to general settings.}}
First, the Lebesgue metric measure space $(\rn, |\cdot|, m)$ in the classical case, with Euclidean metric $|\cdot|$ and Lebesgue measure $m$,
can be generalized to the metric measure space $(X,\r, \mu)$ with metric $\r$ and measure $\mu$.
More precisely, one can study the relatively compactness property on $L^p(X,\r, \mu)$,
see \cite{GorkaMacios2014MMN, GorkaRafeiro2017AASFM, Krotov2012MS} for this direction.
In this general case, if one also wants to establish an equivalent characterization theorem on $L^p(X, \r, \mu)$ like Theorem A,
the condition (c) in Theorem A should be replaced by the following condition:
\be
(c^{+})
\ \ \ \ \ \ \ \ \ \
\lim_{r\rightarrow 0}\sup_{f\in \calF}\left\|\frac{1}{\mu(B(\cdot,r))}\int_{B(\cdot,r)}fd\mu-f\right\|_{L^p(X,\r,\mu)}=0,
\ee
where $B(x,r)=\{y\in X: \r(x,y)<r\}$.
Recently, in a more general framework, the compactness criteria were studied by G\'{o}rka--Rafeiro \cite{GorkaRafeiro2016NA} in the setting of Banach function space $E$ associated with $(X,\r, \mu)$.
The main result \cite[Theorem 3.1]{GorkaRafeiro2016NA} is a new relatively compact criteria fitting more general cases, in which
the equicontinuous condition is replaced by
\be
(c^{*})
\ \ \ \ \ \ \ \ \ \
\lim_{r\rightarrow 0}\sup_{f\in \calF}\left\|\frac{1}{\mu(B(\cdot,r))}\int_{B(\cdot,r)}fd\mu-f\right\|_{E}=0,
\ee
where $E$ is a Banach function space containing certain $\mu$-measurable functions.
In \cite{GorkaRafeiro2016NA}, the authors also establish a necessity result \cite[Thoerem 3.2]{GorkaRafeiro2016NA} under some reasonable assumptions of the Banach function spaces.

If $(X,\r, \mu)$ is restricted to the Lebesgue metric meaure space, Caetano-Gogatishvili-pic \cite{CaetanoGogatishviliOpic2016PRSESA}
establish a relatively compact criteria with condition (c), see Theorems 4.3 and Theorem 4.9 in \cite{CaetanoGogatishviliOpic2016PRSESA},
in which the converse is also established under the assumption of rearrangement invariant.

We would like to point out that in the case of Lebesgue metric measure space, (c*) can be deduced by (c).
In fact, by the Minkowski-type inequality (see Appendix A), we deduce that
\ben\label{intro, 1}
\begin{split}
&\left\|\frac{1}{|B(\cdot,r)|}\int_{B(\cdot,r)}f(y)dy-f\right\|_{E}
=
\left\|\frac{1}{|B(0,r)|}\int_{B(0,r)}f(\cdot+y)dy-f\right\|_{E}
\\
&\quad \quad=
\left\|\frac{1}{|B(0,r)|}\int_{B(0,r)}(\tau_{-y}f-f)dy\right\|_{E}
\\
&\quad \quad \leq
\frac{1}{|B(0,r)|}\int_{B(0,r)}\left\|\tau_{-y}f-f\right\|_{E}dy
\leq \sup_{y\in B(0,r)}\|\tau_yf-f\|_{E},
\end{split}
\een
where we assume that $h(y):=\|\tau_{-y}f-f\|_{E}$ is a measurable function.
While this inclusion relations between (c*) and (c) is invalid for general $\mu$ and $\r$.

\item
{\bf{Applications to Harmonic analysis.}}
In the field of harmonic analysis, in order to verify the compactness of a bounded operator, we usually apply the compactness criteria like Theorem A, for instance one can see \cite{Uchiyama1978TMJ2,ChenDingWang2012CJMa} for the unweighted case of compact commutator of singular integral,
and see \cite{BenyiDamianMoenTorres2015MMJ, ChaffeeChenHanTorresWard2018PAMS, ClopCruz2013AASFM, WuYang2018PAMS} for the weighted case.
Especially, in order to verify  the compactness of a $L^p_{\om}(\rn)-L^p_{\om}(\rn)$ bounded operator with some weight function $\om$,
the reasonable equicontinuous condition should be as follows:
\ben\label{i, 1}
\lim_{r\rightarrow 0}\sup_{f\in \calF}\sup_{y\in B(0,r)}\|\tau_yf-f\|_{L^p_{\om}(\rn)},
\een
one can see
\cite{Uchiyama1978TMJ2,ChenDingWang2012CJMa, BenyiDamianMoenTorres2015MMJ, ChaffeeChenHanTorresWard2018PAMS, ClopCruz2013AASFM, WuYang2018PAMS} for more details.
Note that the known results with condition $(c^{+})$ in the setting of metric measure space are invalid here, since condition \eqref{i, 1} can not imply
condition $(c^+)$ even when $\r=|\cdot|$ and $d\mu=\om dx$.
Even to this day, due to the incompleteness of weighted compactness criteria,
the weighted version of Riesz-Kolmogorov theorem is still being improved, one can see a very recent article \cite{XueYabutaYan2018apa},
in which the authors study the relatively compactness criteria for $\lpw$.
We remark that in \cite{XueYabutaYan2018apa}, some additional assumptions are still needed for the weights (see \cite[Theorem 4.2]{XueYabutaYan2018apa}), although
the assumptions permit the weights beyond the $A_{p}$ $(1\leq p<\fy)$ class.
\en
\bigskip
Based on the above two directions of research, we have two natural considerations:
\bn
\item Can the weighted version of Riesz-Kolmogorov theorem be deduced by a more general theorem established in \cite{CaetanoGogatishviliOpic2016PRSESA,GorkaRafeiro2016NA}
on (Quasi-)Banach function spaces?
\item Can the additional assumptions on weights be completely eliminated in the weighted version of Riesz-Kolmogorov theorem?
\en

The main purpose of this article is to consider the two problems mentioned above.
In fact, for general weights without additional assumption, the answer for the first problem is negative,
one will see the detailed explanation in Section 2.
In order to solve the second problem,
we turn to establish a useful relatively compactness criteria
in a suitable framework of (Quasi-)Banach function spaces, which is not included in \cite{CaetanoGogatishviliOpic2016PRSESA, GorkaRafeiro2016NA}.
As an application, we take the final step in establishing a relative compactness criteria for function spaces with any weight.

The remainder of this paper is structured as follows.
In Section 2, we give some required definitions and notations for the framework we are working on.
And, we also explain why the \cite{CaetanoGogatishviliOpic2016PRSESA,GorkaRafeiro2016NA} are invalid in our case.
Section 3 is devoted to the proofs of our main results, including a relatively compactness criteria on Banach and Quasi-Banach function spaces.
We also give an important application on weighted Lebesgue space.

We point out that in the setting of completed metric space, relatively compactness and totally boundedness are equivalent.

\section{Banach function spaces and weighted function spaces}\label{s2}
We first recall some basic definitions of function spaces.
In this paper, we only consider the class of Lebesgue measurable functions, denoted by $L(m)$, where
 $m$ means the Lebesgue measure on $\rn$.
\begin{definition}
A (quasi-)normed space $(E, \|\cdot\|_E)$ with $E\subset L(m)$ is called a (Quasi-)Banach function space ((Q-)BFS) if it satisfies the following conditions:
 \begin{description}\label{df, B}
   \item[$(B0)$] if $\|f\|_E=0\Longleftrightarrow f=0\ \  a.e.$;
   \item[$(B1)$] if $f\in E$, then $\||f|\|_E=\|f\|_E$;
   \item[$(B2)$] if $0\leq g\leq f$, then $\|g\|_E\leq \|f\|_E$;
   \item[$(B3)$] if $0\leq f_n\uparrow f$, then $\|f_n\|_E\uparrow \|f\|_E$;
   \item[$(B4)$] if $A\subset \rn$ is bounded, then $\chi_A\in E$.
 \end{description}
\end{definition}
For a definition of classical Banach function space (CBFS) used in \cite{CaetanoGogatishviliOpic2016PRSESA,GorkaRafeiro2016NA},
one can see the books Bennett-Sharpley \cite{BennettSharpley1988} and Edmunds-Evans \cite{EdmundsEvans2004}, where
$(B4)$ is replaced by
 \begin{description}
\item[$(B4^*)$] if $A\subset \rn$ with $m(A)<\fy$, then $\chi_A\in E$,
 \end{description}
and the following condition is needed
 \begin{description}
\item[$(B5)$] if $A\subset \rn$ and $m(A)<\fy$, then there exists a constant $C(A)$ such that
\be
\int_A|f|dx\leq C(A)\|f\|_E.
\ee
 \end{description}

\begin{remark}\label{remark, 1}
Note that Q-BFS in Definition \ref{df, B} is coincide with \cite[Definition 3.1]{CaetanoGogatishviliOpic2016PRSESA}.
However, in \cite[Remark 3.4]{CaetanoGogatishviliOpic2016PRSESA}, one can see that the terminology "Banach function space"  in \cite{CaetanoGogatishviliOpic2016PRSESA} is used to denote CBFS (not BFS).
We emphasize again that BFS in Definition \ref{df, B} does not need to meet condition $(B_5)$.
We recall that the Q-BFS in Definition \ref{df, B}
is a (Quasi-)Banach space (see \cite[Lemma 3.6]{CaetanoGogatishviliOpic2016PRSESA}).
\end{remark}

Let $L^0(m)$ denote the class of functions in $L(m)$ that are finite almost everywhere,
with the topology of convergence in measure on sets of finite measure.
We recall that Q-BFS is continuous embedded into $L^0(m)$, see \cite[Lemma 3.3]{CaetanoGogatishviliOpic2016PRSESA}.
\begin{lemma}[\cite{CaetanoGogatishviliOpic2016PRSESA}]\label{lemma, L0}
  Let $E$ be a Q-BFS. Then $E$ is continuous embedded into $L^0(m)$.
  In particular, if $f_k$ tends to $f$ in $E$, then $f_k$ tends to $f$ in measure on sets of finite measure, and
  hence some sequence convergence pointwise to $f$ a.e.
\end{lemma}

Moreover, we recall following two definitions.
\begin{definition}[absolutely continuous quasi-norm]
Let $E$ be a Q-BFS.
  A function $f$ in $E$ is said to have absolutely continuous quasi-norm in $E$
  if $\|f\chi_{A_n}\|\rightarrow 0$ as $E_n\rightarrow \emptyset$.
  The set of all functions in $E$ with absolutely continuous quasi-norm is denoted by $E_a$.
  If $E=E_a$, then the space $E$ is said to have absolutely continuous quasi-norm.
\end{definition}
We point out that the dominated convergence theorem holds in Q-BFS with absolutely continuous quasi-norm,
see \cite[Proposition 3.9]{CaetanoGogatishviliOpic2016PRSESA}.
\begin{definition}[uniformly absolutely continuous quasi-norm]
Let $K$ be a Q-BFS, and $K\subset E_a$. Then $K$ is said to have uniformly absolutely continuous quasi-norm $(K\subset UAC(E))$
if for every sequence $\{A_k\}_{k=1}^{\fy}$ with $A_k\rightarrow \emptyset$, $\|f\chi_{A_k}\|_E\rightarrow 0$ holds
uniformly for all $f\in K$.

\end{definition}

In harmonic analysis, a weight is a nonnegative locally integrable function on $\rn$ that takes values in
 $(0,\fy)$ almost everywhere (see \cite{Grafakos2014}).
 For a weight function $\om$, the $\lpw$ function space with $p\in (0,\fy)$ is defined by
 \be
 \lpw:=\bigg\{f\in L^0(m): \|f\|_{\lpw}:=\left(\int_{\rn}|f(x)|^p\om(x)dx\right)^{1/p}<\fy\bigg\}.
 \ee
 In order to find out whether the relative compactness criteria in \cite{CaetanoGogatishviliOpic2016PRSESA,GorkaRafeiro2016NA} can be used in the case of weighted function spaces, we first point out that in the proofs of
 \cite[Theorem 4.3]{CaetanoGogatishviliOpic2016PRSESA} and
 \cite[Theorem 3.1]{GorkaRafeiro2016NA}, the authors actually only use $(B5)$ with bounded set $A$.
 In the case of weighted function spaces $E=\lpw$ with $1\leq p<\fy$,
 the $(B5)$ condition with bounded set $A$ is just
 \ben\label{s2, 2}
 \int_A|f|dx\leq C(A)\left(\int_{A}|f(x)|^p\om(x)dx\right)^{1/p}\ \  \text{for any bounded set}\ A,
 \een
 which is equivalent to
 \ben\label{s2, 1}
 \int_A\om(x)^{1-p'}dx<\fy\ \  \text{for any bounded set}\ A,
 \een
 where the right term should be interpreted as $\|\om^{-1}\chi_A\|_{L^{\fy}}$ for $p=1$.
It is well known that \eqref{s2, 1} can be deduced by the so-called $A_p$ condition or be as an independent assumption as in \cite[Lemma 4.1]{XueYabutaYan2018apa}.
We recall the definition of $A_p$ as follows.
\begin{definition}[\cite{Grafakos2014}]\label{d-Ap weight}
For $1<p<\infty$, the Muckenhoupt class $A_p$ is the set of locally integrable weights $\om$ such that
\be
[\om]_{A_p}^{1/p}:=\sup_{Q}\left(\frac{1}{|Q|}\int_{Q}\om(x)dx\right)^{1/p}\left(\frac{1}{|Q|}\int_{Q}\om(x)^{1-p'}dx\right)^{1/p'}<\infty,
\ee
where $1/p+1/p'=1$, $Q$ denotes the cubes in $\rn$.
\end{definition}
\begin{definition}[\cite{Grafakos2014}]
  A weight function $\om$ is called an $A_1$ weight if
  \be
  \frac{1}{|Q|}\int_{Q}\om(x)dx\leq [\om]_{A_1}\text{ess.inf}_{y\in Q}\om(y),
  \ee
  where $Q$ denotes the cubes in $\rn$.
\end{definition}
Note that the condition \eqref{s2, 1} holds for $A_p$ weight.
Furthermore, in \cite{GorkaRafeiro2016NA} if we choose the Banach function space $E=\lpw$ containing functions belong to $L^0(m)$,
and observe that in \cite{GorkaRafeiro2016NA} the authors actually only use $(B5)$ with bounded set $A$,
then the following result is a direct conclusion of \cite[Theorem 3.1]{GorkaRafeiro2016NA}.
\begin{proposition}\label{prop}
  Let $1\leq p<\fy$, and
  let $\om$ be a weight satisfies \eqref{s2, 1}. If the subset $\calF$ of $\lpw$ satisfies the following three conditions:
  \begin{description}
  \item [$(a)$] $\calF$ is bounded, i.e.,
  \be
  \sup_{f\in \calF}\|f\|_{\lpw}\lesssim 1;
  \ee
  \item [$(b)$] $\calF$ uniformly vanishes at infinity, that is,
  \be
  \lim_{N\rightarrow \infty}\sup_{f\in \calF}\|f\chi_{B^c(0,N)}\|_{\lpw}= 0;
  \ee
  \item [$(c^*)$] $\calF$ is equicontinuous, that is,
  \be
  \lim_{r\rightarrow 0}\sup_{f\in \calF}\left\|\frac{1}{B(\cdot,r)}\int_{B(\cdot,r)}f(y)dy-f\right\|_{\lpw}=0,
  \ee
  then $\calF$ is a totally bounded subset of $\lpw$.
\end{description}
\end{proposition}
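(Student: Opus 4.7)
The plan is to deduce the proposition directly from \cite[Theorem 3.1]{GorkaRafeiro2016NA}, taking the metric measure space to be $(\rn,|\cdot|,m)$ with Lebesgue measure $m$ and setting $E=\lpw$. Once we verify that this $E$ qualifies as a Banach function space in the classical Edmunds--Evans sense, i.e.\ conditions $(B_0)$--$(B_4)$ together with the stronger $(B_5^*)$, the three hypotheses (a), (b), (c*) of the proposition match the hypotheses of that theorem verbatim and the conclusion follows at once.

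Conditions $(B_0)$--$(B_3)$ are routine for weighted Lebesgue spaces: positive-definiteness of $\|\cdot\|_{\lpw}$ follows from $\om>0$ a.e., the rearrangement identity $(B_1)$ and monotonicity $(B_2)$ are immediate from the defining formula, and $(B_3)$ is the monotone convergence theorem applied to the measure $\om\,dx$. For $(B_4)$, local integrability of $\om$ (built into the definition of a weight) gives $\chi_A\in\lpw$ whenever $m(A)<\fy$.

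The real content is $(B_5^*)$, which for $E=\lpw$ is literally the inequality \eqref{s2, 2}. This is where the hypothesis \eqref{s2, 1} enters. For $1<p<\fy$ one factorises $|f|=(|f|\om^{1/p})\cdot\om^{-1/p}$ and applies H\"older with exponents $p$ and $p'$, yielding
\be
\int_A|f|\,dx\le \left(\int_A|f|^p\om\,dx\right)^{1/p}\left(\int_A\om^{1-p'}\,dx\right)^{1/p'},
\ee
so that the required constant $C(A)=\bigl(\int_A\om^{1-p'}\,dx\bigr)^{1/p'}$ is finite precisely by \eqref{s2, 1}. For the boundary case $p=1$ one instead writes $\int_A|f|\,dx\le \|\om^{-1}\chi_A\|_{L^\fy}\int_A|f|\om\,dx$, interpreting \eqref{s2, 1} as the requirement $\|\om^{-1}\chi_A\|_{L^\fy}<\fy$ as noted in the excerpt.

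With $(B_0)$--$(B_4)$ and $(B_5^*)$ all in place, \cite[Theorem 3.1]{GorkaRafeiro2016NA} applied with $(X,\r,\mu)=(\rn,|\cdot|,m)$ delivers totally boundedness of $\calF$ in $\lpw$. There is essentially no structural obstacle here; the only item that is not purely formal is the H\"older computation above, and the only subtlety worth flagging is the reinterpretation of \eqref{s2, 1} at $p=1$. The substantive difficulty the paper wishes to address lies not in this corollary but in the fact that \eqref{s2, 1} itself is a nontrivial assumption, which is exactly what the authors announce they will remove in their main theorem in Section~3.
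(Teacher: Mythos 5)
Your proposal is correct and follows exactly the route the paper intends: the paper presents this proposition as a direct consequence of \cite[Theorem 3.1]{GorkaRafeiro2016NA} after observing that, for $E=\lpw$, the condition $(B_5^*)$ reduces to \eqref{s2, 2} and hence to \eqref{s2, 1}. Your H\"older computation (and the $p=1$ reinterpretation) simply makes explicit the equivalence the paper states without detail, so there is nothing to add.
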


From \eqref{intro, 1} or \cite[Theorem 4.3]{CaetanoGogatishviliOpic2016PRSESA},
the condition $(c^*)$ in Proposition \ref{prop} can be replaced by the following stronger one:
\ben
(c)\ \ \ \ \ \ \ \ \ \ \ \ \ \ \lim_{r\rightarrow 0}\sup_{f\in \calF}\sup_{y\in B(0,r)}\|\tau_yf-f\|_{\lpw}=0.
\een
However, for more general weights, $(B5)$ is too strong to apply.
In order to explain this more precisely, we give a counterexample here.
Let $E=\lpw$ with $\om(x)=|x|^{n(p-1)+1}$, $f_N=\chi_{B(0,1/N)}$ and $A=B(0,1)$.
A direct calculation yields that
\be
\sup_{f\in X}\frac{\int_A|f|dx}{\|f\|_E}\geq \frac{\int_{\rn}f_Ndx}{\|f_N\|_{\lpw}}\sim \frac{N^{-n}}{N^{-n-1/p}}=N^{1/p}\rightarrow \fy,
\ \ \ \ \text{as}\  N\rightarrow \fy.
\ee
This breaks the condition \eqref{s2, 2}.
This counterexample shows the invalidation of conclusions in \cite{CaetanoGogatishviliOpic2016PRSESA,GorkaRafeiro2016NA}, in which
$(B_5)$ condition can not be removed.

\begin{remark}\label{remark, 2}
Note that, for any weight $\om$, $L^p_{\om}(\rn) (1\leq p<\fy)$ even if not satisfy \eqref{s2, 1}, is still a BFS.
So the additional assumption on weight function can be completely eliminated in our framework.
\end{remark}

\section{relatively compactness criteria}\label{s3}
In this section, we give and prove our main theorems, including the relatively compactness criteria
on BFS and Q-BFS, and including an important application on weighted Lebesgue space $\lpw$.
\subsection{on Banach function space}
In this subsection, we establish the relatively compactness criteria in the framework of Banach function space.
As mentioned above, we drop the assumption $(B5)$, providing a more general framework fitting weighted function spaces with any weight.
Meanwhile, in our following theorem, the equicontinuous condition is chosen to be the ``$(c)$ type'' as in Theorem A. Although the "$(c^*)$ type" as in
Proposition \ref{prop}
is weaker, however, the relatively compactness criteria of ``$(c^*)$ type'' relies heavily on the condition $(B5)$, one can see
the proof of \cite[Theorem 3.1]{GorkaRafeiro2016NA} for more details.
On the other hand, the ``$(c)$ type'' condition is more applicable in the field of harmonic analysis, see \cite{Uchiyama1978TMJ2,ChenDingWang2012CJMa, BenyiDamianMoenTorres2015MMJ, ChaffeeChenHanTorresWard2018PAMS, ClopCruz2013AASFM, WuYang2018PAMS}.
Therefore, in the framework of BFS, ``$(c)$ type'' condition is reasonable and has strong applicability.

\begin{theorem}\label{thm, cc}
  Let $E$ be a BFS with absolutely continuous norm. If the family $\calF\subset E$ satisfies the following conditions:
\bn[(a)]
  \item $\calF$ is bounded, i.e.,
  \be
  \sup_{f\in \calF}\|f\|_{E}\lesssim 1;
  \ee
  \item $\calF$ uniformly vanishes at infinity, that is,
  \be
  \lim_{N\rightarrow \infty}\sup_{f\in \calF}\|f\chi_{B^c(0,N)}\|_{E}= 0;
  \ee
  \item $\calF$ is equicontinuous, that is,
  \be
  \lim_{r\rightarrow 0}\sup_{f\in \calF}\sup_{y\in B(0,r)}\|\tau_yf-f\|_E=0,
  \ee
  \en
  then the family $\calF$ is a totally bounded subset of $E$.
\begin{proof}
  From condition (c), there exists a sufficiently small $\d\in (0,1)$ such that
\be
  \tau_yf\in E\ \text{for all}\ y\in [-\d,\d]^n.
\ee
We first claim that $\|\tau_yf-f\|_E\chi_{[-\d,\d]^n}(y)$ and $\|\tau_yf\|_E\chi_{[-\d,\d]^n}(y)$
are measurable.
This is clear for some specific cases such as $E=L^{p}_{\om}$ or $E$ is rearrangement invariant.
However, it is not obvious in our general case.
Here, we only give the detailed proof for the measurability of $\|\tau_yf-f\|_E\chi_{[-\d,\d]^n}(y)$, since
the proof for $\|\tau_yf\|_E\chi_{[-\d,\d]^n}(y)$ is similar and easier.

First, we choose a sequence $\{f_k\}_{k=1}^{\fy}$ of compact supported simple functions such that
\be
|f_k|\leq |f|\ \ \text{and}\ \ \lim_{k\rightarrow \fy}f_k=f\ a.e.
\ee
From this, for fixed $y\in [-\d,\d]^n$,
\be
|\tau_yf_k-f_k|\leq |\tau_yf|+|f|\in E\ \ \text{and}\ \ \lim_{k\rightarrow \fy}(\tau_yf_k-f_k)=\tau_yf-f\  a.e.
\ee
Then, the dominated convergence theorem yields that $\tau_yf_k-f_k\rightarrow \tau_yf-f$ in $E$, which implies that
\be
\lim_{k\rightarrow \fy}\|\tau_yf_k-f_k\|_E=\|\tau_yf-f\|_E.
\ee
Note that the above pointwise convergence is valid for all $y\in [-\d,\d]^n$.
Thus, in order to verify the measurability of $\|\tau_yf-f\|_E\chi_{[-\d,\d]^n}(y)$, we only need
to consider the case that $f$ is a compact supported simple function,
in which we can actually verify that
$\|\tau_yf-f\|_E$ is measurable.

Let us continue the process of regularising $f$. Without loss of generality, we assume that $f$ is supported on $B(0,N)$, with bounded $N$.
Take a nonnegative smooth function $\varphi$ with compact support on $B(0,1)$ such that
\be
\int_{\rn}\varphi(x)dx=1.
\ee
Denote by $\varphi_t(x):=t^{-n}\varphi(x/t)$, $t\in (0,1)$. Set
\be
f_t(x)=f\ast \varphi_t(x)=\int_{B(0,1)}f(x-ty)\varphi(y)dy.
\ee
Then $f_t$ and $\tau_yf_t$ have uniform bound and uniform support for all $t\in (0,1)$. Since
\be
\text{supp}f_t\subset \text{supp}f+\text{supp}\va_t\subset B(0,N+1),\  \text{supp}\tau_yf_t\subset B(0,N+1+|y|)
\ee
and
\be
\|\tau_yf_t\|_{L^{\fy}(\rn)}=\|f_t\|_{L^{\fy}(\rn)}\leq \|f\|_{L^{\fy}(\rn)}\|\va\|_{L^1(\rn)}=\|f\|_{L^{\fy}(\rn)}\leq N.
\ee
Thus, for fixed $y$ and all $t\in (0,1)$ we have
\ben\label{1}
|\tau_yf_t-f_t|\leq 2N\chi_{B(0,N+1+|y|)}.
\een
Since $f\in L^1(\rn)$, we obtain
\be
\lim_{t\rightarrow 0}f_t(x)=f(x)\ \  a.e.
\ee
In fact, the above convergence is valid for every $x$ in the Lebesgue set of $f$ (see \cite[Theorem 2.1]{SteinShakarchi2005}).
From this, for fixed $y$ we have
\ben\label{2}
\lim_{t\rightarrow 0}(\tau_yf_t(x)-f_t(x))=\tau_yf(x)-f(x)\ \  a.e.
\een
Using \eqref{1}, \eqref{2} and the dominated convergence theorem, we have $\tau_yf_t(x)-f_t(x)\rightarrow \tau_yf(x)-f(x)$ in $E$.
In particular, for fixed $y$
\be
\lim_{t\rightarrow 0}\|\tau_yf_t-f_t\|_E=\|\tau_yf-f\|_E.
\ee
Again, the above pointwise convergence is valid for all $y\in \rn$. On the other hand, one can easily check that $f_t\in C_c^{\fy}(\rn)$ for all $t\in (0,1)$.
Thus, in order to verify the measurability of $\|\tau_yf-f\|_E$ for a compact supported simple function $f$,
we only need
to consider the case that $f$ is a smooth function with compact support. However, this is obvious, since
for fixed $x$ and $y\in B(x,1)$ we have
$|\tau_yf-\tau_xf|\leq |y-x|\|\nabla f\|_{L^{\fy}}\chi_{B(0,M)}$ for some $M>0$ depends on $x$ and the support of $f$.
Thus,
\be
\begin{split}
\big|\|\tau_yf-f\|_E-\|\tau_xf-f\|_E\big|
\leq &
\|\tau_yf-\tau_xf\|_E
\\
\leq &
|x-y|\|\nabla f\|_{L^{\fy}}\|\chi_{B(0,M)}\|_E,
\end{split}
\ee
where the last term tends to zero as $y$ tends to $x$.
Thus, the map $y\rightarrow \|\tau_yf-f\|_E$ is continuous, which implies the measurability of $\|\tau_yf-f\|_E$.
This completes the proof of measurability.

Next, we proceed to the totally boundedness of $\calF$.
We only need to find the finite $\ep$-net of $\calF$ for each fixed $\ep$.
  Denote by $R_i:= [-2^{i},2^i]^n$ for $i\in \mathbb{Z}$.
  By condition (b), there exists a sufficiently large positive integer $m$ such that
  \be
  \|f-f\chi_{R_m}\|_{E}<\ep/3.
  \ee
  Thus, we only need to verify that the family of functions $\{f\chi_{R_m}\}_{f\in \calF}$
  has a finite $\frac{2\ep}{3}$-net.
  By condition (c), we choose an integer $i_{\ep}$ such that $2^{i_{\ep}}<\d$ and
  \be
  \|\tau_yf-f\|_E<2^{-n}\ep/3,\ \ \ y\in R_{i_{\ep}}.
  \ee
  For $x\in R_{m}$, $Q_x$ means the dyadic cube $\prod_{j=1}^n[m_j2^{i_{\ep}},(m_j+1)2^{i_{\ep}})$ of side length $2^{i_{\ep}}$ that contains $x$, for some integers $m_j$. Obviously, any two dyadic cubes $Q_x$ and $Q_y$ either are disjoint or coincide.
  Define
  \be
  \Phi(f\chi_{R_m})(x)=
  \begin{cases}
    f_{Q_x}:=\frac{1}{|Q_x|}\int_{Q_x}f(y)dy,\ &x\in R_m,
    \\
    0,\ \ \ &\text{otherwise}.
  \end{cases}
  \ee
  We claim that the map $\Phi$ is well-defined by
  \be
  \int_{Q_x}|f(y)|dy<\fy,\ \ \ \text{for}\ \ x\in R_m.
  \ee
  It follows by the Minkowski-type inequality that
  \be
  \begin{split}
  \left\|\int_{R_{i_{\ep}}}|f(\cdot-y)|dy\right\|_E
  \leq &
\int_{R_{i_{\ep}}}\left\|f(\cdot-y)\right\|_Edy
  \\
  \leq &
  \int_{R_{i_{\ep}}}\|\tau_yf-f\|_Edy
  +
  \int_{R_{i_{\ep}}}\|f\|_Edy
  \\
  \leq &
  (2\d)^n \sup_{y\in [-\d,\d]^n}\|\tau_yf-f\|_E<\fy+|R_{i_{\ep}}|\|f\|_E<\fy.
  \end{split}
  \ee
  For any fixed $Q_x$, there exists a point $x_0\in Q_x$ such that
  \be
  \int_{R_{i_{\ep}}}|f(x_0-y)|dy<\fy.
  \ee
Observing that $Q_x\subset x_0-R_{i_{\ep}}$, we further have

\be
\int_{Q_x}|f(y)|dy
\leq
\int_{x_0-R_{i_{\ep}}}|f(y)|dy
=
\int_{R_{i_{\ep}}}|f(x_0-y)|dy<\fy.
\ee

Next, we turn to the estimate of $\|f\chi_{R_m}-\Phi(f\chi_{R_m})\|_{E}$.
A direct calculation yields that
\be
\begin{split}
|(f-f_{Q_x})\chi_{Q_x}|
= &
\left|\frac{1}{|Q_x|}\int_{Q_x}\left(f(x)-f(z)\right)dz\chi_{Q_x}\right|
\\
\leq &
\frac{1}{|Q_x|}\int_{Q_x}|f(x)-f(z)|dz\chi_{Q_x}
\\
\leq &
\frac{1}{|Q_x|}\int_{R_{i_{\ep}}}|f(x)-f(x-y)|dy\chi_{Q_x}.
\end{split}
\ee
It follows that
\be
\begin{split}
  \left|\sum_{Q_x\subset R_m}(f-f_{Q_x})\chi_{Q_x}\right|
  \leq &
\sum_{Q_x\subset R_m}\left|(f-f_{Q_x})\chi_{Q_x}\right|
\\
\leq &
\sum_{Q_x\subset R_m}\frac{1}{|Q_x|}\int_{R_{i_{\ep}}}|f(x)-f(x-y)|dy\chi_{Q_x}
\\
= &
2^{-n i_{\ep}}\int_{R_{i_{\ep}}}|f(x)-f(x-y)|dy\chi_{R_m}(x)
\end{split}
\ee

  Hence,
  \be
  \begin{split}
    \|f\chi_{R_m}-\Phi(f\chi_{R_m})\|_E
    = &
    \left\|\sum_{Q_x\subset R_m}f\chi_{Q_x}-\sum_{Q_x\subset R_m}f_{Q_x}\chi_{Q_x}\right\|_E
    \\
    = &
    \left\|\sum_{Q_x\subset R_m}(f-f_{Q_x})\chi_{Q_x}\right\|_E
    \\
    \leq &
    2^{-n i_{\ep}}\left\|\int_{R_{i_{\ep}}}|f(x)-f(x-y)|dy\chi_{R_m}(x)\right\|_E
  \end{split}
  \ee
  Applying the Minkowski-type inequality, we have
  \be
  \begin{split}
  & 2^{-n i_{\ep}}\left\|\int_{R_{i_{\ep}}}|f(x)-f(x-y)|dy\chi_{R_m}(x)\right\|_E
  \\
  \leq &
  2^{-n i_{\ep}}\int_{R_{i_{\ep}}}\|\tau_yf-f\|_Edy\leq 2^n\sup_{y\in R_{i_{\ep}}}\|\tau_yf-f\|_E<\ep/3.
  \end{split}
  \ee
  The above two estimates imply that
  \be
  \|f\chi_{R_m}-\Phi(f\chi_{R_m})\|_E<\ep/3.
  \ee
  From this, to get our final conclusion, we only need to verify that the family of functions $\{\Phi(f\chi_{R_m})\}_{f\in \calF}$
  has a finite $\frac{\ep}{3}$-net. This is true since this family is a bounded  subset of a finite dimensional Banach space.
  Let us check the boudedness by
  \be
  \begin{split}
  \|\Phi(f\chi_{R_m})\|_E
  \leq &
  \|f\chi_{R_m}-\Phi(f\chi_{R_m})\|_E+\|f\chi_{R_m}\|_E
  \\
  \leq &
  \ep/3+\|f\|_E\lesssim 1.
  \end{split}
  \ee
  Now, we have completed this proof.
\end{proof}
\end{theorem}
\begin{remark}
Here, the proof is finished by a finite dimension argument.
By adapting the arguments in \cite{Hanche-OlsenHolden2010EM},
  the authors in \cite{ClopCruz2013AASFM} also used a finite dimension argument
  to prove the relatively compactness criteria on $\lpw$ with $\om\in A_{p}(\rn)$.
  Unfortunately, the method in \cite{ClopCruz2013AASFM, Hanche-OlsenHolden2010EM} is invalid here, since
  it depends heavily on the $A_{p}(\rn)$ condition and the special structure of $\lpw$.
  By contrast, our new method is applicable for more general
  Banach function spaces including weighted function spaces with any weight.
\end{remark}

\subsection{on weighted function space}
Obviously, Theorem \ref{thm, cc} can be applied to $\lpw$ spaces with any weights.
Here, we would like to show a more general case in which the ``weight function $v$'' can even disappear on a positive measurable set.
\begin{theorem}\label{thm, ccbw}
  Let $1\leq p<\fy$, $v\in L^0(m)$ be a nonnegative function. Define
  \be
  \|f\|_{L^p_v(\rn)}:=\left(\int_{\rn}|f(x)|^pv(x)dx\right)^{1/p}.
  \ee
  If the family $\calF\subset L_v^p(\rn)$ satisfies the following conditions:
\bn[(a)]
  \item $\calF$ is bounded, i.e.,
  \be
  \sup_{f\in \calF}\|f\|_{L^p_v(\rn)}\lesssim 1;
  \ee
  \item $\calF$ uniformly vanishes at infinity, that is,
  \be
  \lim_{N\rightarrow \infty}\sup_{f\in \calF}\|f\chi_{B^c(0,N)}\|_{L^p_v(\rn)}= 0;
  \ee
  \item $\calF$ is equicontinuous, that is,
  \be
  \lim_{r\rightarrow 0}\sup_{f\in \calF}\sup_{y\in B(0,r)}\|\tau_yf-f\|_{L^p_v(\rn)}=0,
  \ee
  \en
  then the family $\calF$ is a totally bounded subset of $L^p_v(\rn)$.
%  Here, the metric space $L^p_v(\rn)$ should be
%  interpret as the quotient space associated with the equivalence class defined by
%  \be
%  f\sim h: (f-h)g=0\ \ a.e.
%  \ee
\end{theorem}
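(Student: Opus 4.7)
The plan is to apply Theorem \ref{thm, cc} with $E = L^p_v(\rn)$, once this choice has been shown to fit the Banach-function-space framework of Section 2 (which uses the weaker axiom $(B_5)$ rather than $(B_5^*)$). Axioms $(B_1)$--$(B_4)$ are straightforward for $L^p_v$, and $(B_0)$ holds after the standard identification of functions that agree $v$-almost everywhere. The essential point, already highlighted at the end of Section 2, is that $L^p_v$ satisfies $(B_5)$ for any nonnegative $v$: from $f \chi_A \in L^p_v$ with $\|\chi_A\|_{L^p_v} \neq 0$ we deduce that $A \cap \{v > 0\}$ has positive $m$-measure and $|f|^p v < \fy$ $m$-a.e.\ on $A$, so that $|f| < \fy$ $m$-a.e.\ on $A \cap \{v > 0\}$, which supplies the point $x_0 \in A$ required by $(B_5)$.

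Once this is granted, conditions (a), (b), (c) of the present theorem are literally conditions (a), (b), (c) of Theorem \ref{thm, cc} with $E = L^p_v$, and the proof transcribes almost verbatim. The only step that needs mild additional bookkeeping is the definition of the piecewise-average operator $\Phi(f \chi_{R_m})$: on cubes $Q_x \subset R_m$ with $\int_{Q_x} v\, dy = 0$ the $L^p_v$-seminorm is insensitive to the values of $\Phi$ on $Q_x$, so I would simply set $\Phi = 0$ there; on the remaining cubes, $(B_5)$ applied to $g(x) := \int_{R_{i_{\ep}}} |f(x - y)|\, dy$, which lies in $L^p_v$ by the same Minkowski estimate used in the proof of Theorem \ref{thm, cc} together with condition (c), produces a point $x_0 \in Q_x$ with $\int_{Q_x} |f|\, dy < \fy$, so the Lebesgue average $f_{Q_x}$ is well defined. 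The Minkowski-inequality bound $\|f \chi_{R_m} - \Phi(f \chi_{R_m})\|_{L^p_v} \leq 2^n \sup_{y \in R_{i_{\ep}}} \|\tau_y f - f\|_{L^p_v} < \ep/3$ then follows as in Theorem \ref{thm, cc}, and the finite-dimensional argument closes the proof.

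The main obstacle is conceptual rather than technical: one must recognize that the relaxation of $(B_5^*)$ to $(B_5)$ is precisely what brings $L^p_v$ into our framework for arbitrary nonnegative $v$---including those that vanish on a set of positive $m$-measure, exactly the case in which $(B_5^*)$ fails (per the counterexample in Section 2). With this identification in hand, the theorem is essentially a corollary of Theorem \ref{thm, cc}, modulo the bookkeeping indicated above for the degenerate cubes.
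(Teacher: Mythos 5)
Your proposal is correct and follows essentially the same route as the paper: the paper likewise does not invoke Theorem \ref{thm, cc} as a black box but reruns its proof with the averaging map $\Phi(f\chi_{R_m})$ redefined to be $0$ on the cubes $Q_x$ with $\|\chi_{Q_x}\|_{L^p_v(\rn)}=0$, and on the nondegenerate cubes extracts the point $x_0\in Q_x$ from the a.e.\ finiteness of $\left(\int_{R_{i_\ep}}|f(x-y)|\,dy\right)^p v(x)$ exactly as you describe. Your explicit remark that the pointwise bound need not hold on the degenerate cubes but that the $L^p_v$-norm is insensitive there is a point the paper leaves implicit.
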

\begin{proof}
  As in the proof of Theorem \ref{thm, cc},
   we only need to verify that the family of functions $\{f\chi_{R_m}\}_{f\in \calF}$
  has a finite $\frac{2\ep}{3}$-net.
  Take $i_{\ep}$ as in the proof of Theorem \ref{thm, cc}.
  For $x\in R_{m}$, $Q_x$ means the dyadic cube of side length $2^{i_{\ep}}$ that contains $x$.
  Define
  \be
  \Phi(f\chi_{R_m})=
  \begin{cases}
    f_{Q_x},\ &x\in R_m,\ \|\chi_{Q_x}\|_{L^p_v(\rn)}\neq 0,
    \\
    0,\ \ \ &\text{otherwise}.
  \end{cases}
  \ee
  We claim that the map $\Phi$ is well-defined by
%  \be
%  \Phi(f\chi_{R_m})=\Phi(h\chi_{R_m})\ \ for\ \ f\sim h
%  \ee
  \be
  \int_{Q_x}|f(y)|dy<\fy,\ \ \ \text{for}\ \ x\in R_m,\ \|\chi_{Q_x}\|_{L^p_v(\rn)}\neq 0.
  \ee
  In fact, by the same estimate as in the proof of Theorem \ref{thm, cc} we have
  \be
  \begin{split}
  \left\|\int_{R_{i_{\ep}}}|f(\cdot-y)|dy\right\|_{L^p_v(\rn)}<\fy.
  \end{split}
  \ee
  From this, we have
  \be
  \left|\int_{R_{i_{\ep}}}|f(x-y)|dy\right|^pv(x)<\fy\ \ \ \text{a.e.}\ x\in \rn.
  \ee
  If $\|\chi_{Q_x}\|_{L^p_v(\rn)}\neq 0$, then $v(x)\neq 0$ on a positive measurable subset of $Q_x$.
  Then, there exists a point $x_0\in Q_x$ such that
  \be
  \int_{R_{i_{\ep}}}|f(x_0-y)|dy<\fy.
  \ee
Observing that $Q_x\subset x_0-R_{i_{\ep}}$, the reasonable definition of $\Phi$ is assured by
\be
\int_{Q_x}|f(y)|dy
\leq
\int_{x_0-R_{i_{\ep}}}|f(y)|dy
=
\int_{R_{i_{\ep}}}|f(x_0-y)|dy<\fy.
\ee
The remaining part of this proof is the same as the proof of Theorem \ref{thm, cc}.
\end{proof}

\subsection{on Quasi-Banach space}
In the field of harmonic analysis, one may deal with the boundedness and compactness of certain operators
on Quasi-Banach spaces. An important situation occurs in the multilinear setting, where the multilinear operator can be
bounded or compact into a Quasi-Banach space such as $\lpw$ with $p\in (0,1)$, see \cite{GrafakosTorres2002AM, ChaffeeChenHanTorresWard2018PAMS}.
Thus, it is very important to establish a corresponding relatively compactness criteria on Quasi-Banach spaces.
In fact, the relatively compactness criteria in Theorem \ref{thm, cc}
can be also valid for the power Q-BFS, providing
an important application on $\lpw$ with $p<1$.
\begin{definition}[Power Q-BFS]\label{def, pqbfs}
  A Q-BFS $E$ is called a power Q-BFS if there exists a constant $b\in (0,1]$ such that
  $\|\cdot\|_{E^b}:=\||\cdot|^{1/b}\|_E^{b}$ is a norm and $E^b\subset L^0(m)$ is a BFS.
\end{definition}
Denote by $\|\cdot\|_{E^b}:=\||\cdot|^{1/b}\|_E^{b}$.
We point out that when $p\in (0,1)$, $L^p_{\om}$ is a power Q-BFS
with $b=p$.
Recall that our definition is weaker than \cite[Definition 4.5 and Theorem 4.9]{CaetanoGogatishviliOpic2016PRSESA},
since $E^b$ is assumed to be a CBFS there, see also Remark \ref{remark, 1}.
Because of this, the conclusion of \cite[Theorem 4.9]{CaetanoGogatishviliOpic2016PRSESA}
can not be used for $L^p_{\om}$ with any weight, see also Remark \ref{remark, 2}.
However, the method used in the proof of \cite[Theorem 4.9]{CaetanoGogatishviliOpic2016PRSESA}
inspired our proof of relatively compactness criteria for power Q-BFS.
Before stating our theorem, we first recall a characterization of relatively compact set in Q-BFS.
We would like to note that the condition $(B4)$ in our Definition \ref{df, B} is enough to prove the following lemma.
\begin{lemma}[see Theorem 3.17 in \cite{CaetanoGogatishviliOpic2016PRSESA}]\label{lemma, cha of precompact}
Let $E$ be a Q-BFS and $K\subset E_a$. Then $K$ is relatively compact in $E$ if and only if
it is locally relatively compact in measure and $K\subset UAC(E)$.
\end{lemma}

Now, we state our theorem as follows. The proof follows by \cite[Theorem 4.9 (b)]{CaetanoGogatishviliOpic2016PRSESA}
with some slight modifications fitting our case.

\begin{theorem}\label{thm, ccq}
The conclusion of Theorem \ref{thm, cc} is also valid
if $E$ is a power Q-BFS with absolutely continuous norm.
\end{theorem}

\begin{proof}
Denote by
\be
Re_{\pm}\calF=\{(Ref)_{\pm}: f\in \calF\},\ Im_{\pm}\calF=\{(Imf)_{\pm}: f\in \calF\}.
\ee
Observe that
\bn
\item If $\calF$ satisfies $(a)-(c)$ in Theorem \ref{thm, cc}, so do $Re_{\pm}\calF$ and $Im_{\pm}\calF$;
\item If $Re_{\pm}\calF$ and $Im_{\pm}\calF$ are all relatively compact in $E$, so does $\calF$.
\en
So, we only need to consider the case that $\calF$ only consists of nonnegative functions.
Choose a constant $b$ as in Definition \ref{def, pqbfs}.
Note that $E^b$ is a power Q-BFS with absolutely continuous norm.
Denote by $\calF^b: =\{f^{b}: f\in \calF\}$.
We claim that $\calF^b$ is relatively compact in $E^b$.
In fact, $\calF^b$ satisfies all the conditions $(a)-(c)$ in Theorem \ref{thm, cc}
with norm $\|\cdot\|_{E^b}$. We only check the condition $(c)$ by
\be
\|\tau_yf^{b}-f^{b}\|_{E^b}\leq \||\tau_yf-f|^{b}\|_{E^b}=\|\tau_yf-f\|_E^{b}.
\ee

Next, we will see that the relatively compactness of $\calF^b$ in $E^b$ implies the relatively compactness of $\calF$ in $E$.
Take any sequence $\{f_k\}_{k=1}^{\fy}$ of $\calF$. By the relatively compactness of $\calF^b$ in $E^b$, there exists a subsequence of
$\{f^b_k\}_{k=1}^{\fy}$, still denoted by $\{f^b_k\}_{k=1}^{\fy}$, which tends to $f^b$ in $E^b$. Using Lemma \ref{lemma, L0},
$f^b_k\rightarrow f^b$ locally in measure.
Next, by choosing the diagonal subsequence from each subset with finite measure, we further find a
subsequence of $\{f^b_k\}_{k=1}^{\fy}$, still denoted by $\{f^b_k\}_{k=1}^{\fy}$,  pointwise  tends to $f^b$ a.e.
From this, $f_k\rightarrow f$ pointwise a.e., which further implies that $f_k\rightarrow f$ locally in measure.
Note that $f\in E$, we actually verify that $\calF$ is locally relatively compact in measure.

On the other hand, since $\calF^b$ is relatively compact in $E^b$, then $\calF^b\subset UAC(E^b)$ by Lemma \ref{lemma, cha of precompact}.
One can easily verify that $\calF^b\subset UAC(E^b)$ implies $\calF\subset UAC(E)$.

Now, we have verified that $\calF\subset UAC(E)$, and $\calF$ is locally relatively compact in measure.
The relatively compactness of $\calF$ follows by
 Lemma \ref{lemma, cha of precompact}.
\end{proof}

\begin{remark}
The proof of Theorem \ref{thm, cc}, based on the Minkowski-type inequality, is invalid for the Quasi-Banach case here.
In order to deal with the Quasi-Banach case, we use Lemma \ref{lemma, cha of precompact} to reduce the proof to the Banach case.
We remark that the original idea of reduction to the Banach case of $L^p_{\om}$ should be due to Tsuji \cite{Tsuji1951KMSR}.
\end{remark}
As a useful conclusion, we show the final version of relatively compactness criteria for $\lpw$ as follows.
\begin{corollary}[Relatively compactness criteria for $\lpw$ with any weight]\label{lemma, general FK theorem}
  Let $p\in (0,\infty)$, $\om$ be a weight.
  A subset $\calF$ of $\lpw$ is relatively compactness (or totally bounded) if
  the following statements are valid:
  \bn[(a)]
  \item $\calF$ is bounded, i.e., $\sup\limits_{f\in \calF}\|f\|_{L^p(\om)}\lesssim 1$;
  \item $\calF$ uniformly vanishes at infinity, that is,
  \be
  \lim_{N\rightarrow \infty}\sup_{f\in \calF}\|f\chi_{B^c(0,N)}\|_{\lpw}= 0;
  \ee
  \item $\calF$ is equicontinuous, that is,
  \be
  \lim_{r\rightarrow 0}\sup_{f\in \calF}\sup_{y\in B(0,r)}\|\tau_yf-f\|_{\lpw}=0.
  \ee
  \en
  \end{corollary}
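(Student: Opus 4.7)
The plan is to split the argument on the range of $p$ and reduce to the two theorems already proved in this section. First I note that in every case $\lpw$ is complete (as a Banach space for $p\geq 1$ and as a complete metric quasi-Banach space for $0<p<1$), so the remark at the end of Section \ref{s1} applies and it suffices to prove total boundedness of $\calF$.

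For $p\in[1,\fy)$, $\lpw$ is literally of the form $L^p_v(\rn)$ with the nonnegative measurable function $v=\om$, and the three hypotheses (a)--(c) of the corollary coincide word-for-word with the hypotheses of Theorem \ref{thm, ccbw}. So that theorem applies directly and delivers total boundedness of $\calF$ with no further work.

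For $p\in(0,1)$ I plan to invoke Theorem \ref{thm, ccq}. The first step is to check that $\lpw$, equipped with its usual quasi-norm, satisfies $(B_1)$--$(B_5)$; of these, $(B_1)$--$(B_4)$ are immediate from the definition and the only non-trivial point is $(B_5)$, which is precisely the observation recorded in Section \ref{s2} that $\int_A |f|^p\om\,dx<\fy$ forces $|f|(x)<\fy$ a.e.\ on $A$, with no hypothesis whatsoever on $\om$. The second step is to choose a positive integer $N$ with $Np\geq 1$ and set
\be
\|f\|_Y:=\||f|^N\|_{\lpw}^{1/N}=\left(\int_{\rn}|f(x)|^{Np}\om(x)\,dx\right)^{1/(Np)},
\ee
so that $\|\cdot\|_Y$ is the genuine $L^{Np}_{\om}(\rn)$-norm, hence indeed a norm. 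The third step is the multiplicative estimate
\be
\Bigl\|\prod_{j=1}^N f_j\Bigr\|_{\lpw}\lesssim \prod_{j=1}^N \||f_j|^N\|_{\lpw}^{1/N},
\ee
which is an immediate application of H\"older's inequality with $N$ equal factors of exponent $N$. With these three ingredients verified, Theorem \ref{thm, ccq} becomes applicable and yields the conclusion.

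The only point calling for any care is the quasi-Banach range $0<p<1$: one must identify the correct auxiliary norm $\|\cdot\|_Y$ and verify the H\"older-type product bound before the transfer mechanism of Theorem \ref{thm, ccq} can be invoked. Fortunately, neither step uses anything beyond the nonnegativity and local integrability of $\om$, so the corollary indeed holds for arbitrary weights, completing the removal of the auxiliary hypotheses appearing in previous versions of the weighted Riesz--Kolmogorov theorem.
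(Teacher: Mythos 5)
Your proposal is correct and follows essentially the same route as the paper: the case $p\geq 1$ is handled by direct appeal to Theorem \ref{thm, ccbw}, and the case $p\in(0,1)$ by Theorem \ref{thm, ccq} with $N$ chosen so that $Np\geq 1$ (the paper takes $N=[1/p]+1$). Your explicit verification of $(B_5)$ and of the H\"older-type product bound simply fills in details the paper leaves implicit.
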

  \begin{proof}
    Take $b=\min\{1,p\}$,
    this conclusion follows by Theorem \ref{thm, ccq}.
  \end{proof}

\appendix
\section{}
In the case of CBFS, the Minkowski-type inequality follows by the application of associate space, see \cite[Lemma 4.2]{CaetanoGogatishviliOpic2016PRSESA}.
However, the absence of $(B_5)$ in BFS leads to the useless of previous method.
Hence, we would like to give a proof for the Minkowski-type inequality for BFS with absolutely continuous norm.
The key point is to linearise the norm. For this, we recall a result in \cite[Theorem 4.1]{BennettSharpley1988}.
\begin{lemma}\label{lemma, linearization}
  Let $E$ be a BFS with absolutely continuous norm. For every $T\in E^{\ast}$, there exists a measurable function $g$ such that
  \be
  T(f)=\int_{\rn}f(x)g(x)dx\ \ \text{for all}\ f\in E
  \ee
  with
  \be
  \sup_{\|f\|_E=1}\int_{\rn}|f(x)g(x)|dx \leq 2\|T\|_{E^*}.
  \ee
\end{lemma}
\begin{proposition}[Minkowski-type inequality]
Let $E$ be a BFS with absolutely continuous norm. Suppose that $f$ is a nonnegative measurable function on $\rn\times\rn$
with $\int_{\rn}\|f(\cdot,y)\|_Edy<\fy$, then
\be
\left\|\int_{\rn}f(\cdot,y)dy\right\|_E\leq \int_{\rn}\|f(\cdot,y)\|_Edy.
\ee
\begin{proof}
  Denote by $h(x):=\int_{\rn}f(x,y)dy$, we first claim that $h\in E$.
  Take a sequence $\{h_k\}_{k=1}^{\fy}$
  of simple functions such that $0\leq h_k\uparrow h$ a.e. and $h_k\in E$.
  For fixed $k$, by Hahn-Banach theorem there exists a bounded linear functional $T_k$ such that
  \ben\label{ap, 1}
  T_kh_k=\|h_k\|_E\ \ \ \ \text{and}\ \ \ \ \ \|T_k\|_{E^*}=1.
  \een
  Then, Lemma \ref{lemma, linearization} yields that there exists a function $g_k$ such that
  \ben\label{ap, 2}
  T_kf=\int_{\rn}f(x)g_k(x)dx
  \een
  and
  \ben\label{ap, 3}
  \int_{\rn}|f(x)g_k(x)|dx \leq 2\|T_k\|_{E^*}\|f\|_E=2\|f\|_E \ \ \ \forall f\in E.
  \een

  Using \eqref{ap, 1} and \eqref{ap, 2}, we obtain
  \be
  \begin{split}
    \|h_k\|_E=T_kh_k
    = &\int_{\rn}h_k(x)g_k(x)dx
    \leq
    \int_{\rn}h_k(x)|g_k(x)|dx
    \leq
    \int_{\rn}h(x)|g_k(x)|dx.
  \end{split}
  \ee
  By \eqref{ap, 3} and the fact $h(x)=\int_{\rn}f(x,y)dy$, we further have
  \be
  \begin{split}
    \|h_k\|_E
    \leq &
    \int_{\rn}\int_{\rn}f(x,y)dy|g_k(x)|dx
    \\
    = &
    \int_{\rn}\int_{\rn}f(x,y)|g_k(x)|dx dy
    \leq
    \int_{\rn}2\|f(\cdot,y)\|_E dy<\fy.
  \end{split}
  \ee
  Letting $k\rightarrow \fy$, we deduce that $h\in E$ by $(B_3)$ of Definition \ref{df, B}.

  Now, since $h\in E$, replacing $h_k$ by $h$, the similar proof as above yields that
  there exists a measurable function $g$ such that
  \be
  \begin{split}
    \|h\|_E
    =
    \int_{\rn}\int_{\rn}f(x,y)dy g(x)dx
    =
    \int_{\rn}\int_{\rn}f(x,y)g(x)dx dy
    \leq
    \int_{\rn}\|f(\cdot,y)\|_E dy,
  \end{split}
  \ee
  where we use Fubini's theorem in the last equality.  Now, this proof has been completed.
\end{proof}

\end{proposition}

\subsection*{Acknowledgements}
The authors sincerely appreciate the anonymous referees for checking this paper very carefully
and giving very useful comments, which greatly improved this article.

%\bibliographystyle{abbrv}

%\begin{thebibliography}{99}
%\bibitem{John_1965}
%F. John, Quasi-isometric mappings, Seminari 1962-1963 di Analisi, Algebra,
%Geometria e Topologia, Rome, 1965.
%\bibitem{LOR}A.K. Lerner, S. Ombrasi and I.P. Rivera-R\'ios,
%Commutators fo singular integrals revisted.
%arXiv: 1709.04724v1, 2017.
%\bibitem{Strombreg_Indiana_1979}
%J.-O. Str\"{o}mberg, Bounded mean oscillation with Orlicz norms and duality of
%Hardy spaces, Indiana Univ. Math. J., 28 (1979), 511-544.
%\bibitem{Uchiyama_Tohoku_1978}
%A. Uchiyama, On the compactness of operators of Hankel type. Tohoku Math J, 1978, 30: 163-171
%\end{thebibliography}

\end{document}